\documentclass{amsproc}

  \usepackage[all]{xy}

  \usepackage{epsf,epsfig,amsfonts,graphicx,color}

\usepackage{amsmath,amssymb}
  \usepackage{url}
  
  \usepackage{textcase}

\newtheorem{theorem}{Theorem}[section]

\theoremstyle{definition}
\newtheorem{definition}[theorem]{Definition}

\newtheorem{oproblem}{Question} 
\newtheorem{proposition}{Proposition}[section]

\theoremstyle{remark}
\newtheorem{remark}[theorem]{Remark}

\numberwithin{equation}{section}

 \def\R{\mathbb R}

 \def \E {\mathbb E}

\def\eps{\epsilon}

\newcommand{\beq}{\begin{equation}}
\newcommand{\eeq}{\end{equation}}
\newcommand{\Leq}[1]{\label{#1}\end{equation}}

\begin{document}
\title[\NoCaseChange{Halfway to heaven}] {Halfway between Heaven and  Hell  \\[0.5em] {\small Questions arising while crossing the virial hypersurface}}


\author{Richard Montgomery}
\address{Math Dept., UC Santa Cruz,  Santa Cruz, CA, USA, 95064}
\curraddr{Math Dept., UC Santa Cruz,  Santa Cruz, CA, USA, 95064}
\email{rmont@ucsc.edu}
\thanks{Simon's Foundation Travel Grant}


\subjclass[2020]{Primary: 70F10, 70F07; Secondary: 37J51, 53B50  }

\date{today}

\begin{abstract}
 We pose several   questions for the classical N-body problem  
 inspired by connections between the  virial equation and the Jacobi-Maupertuis formulation
 of mechanics.   We answer some.  
\end{abstract}

\maketitle


\section{Brake orbits and the Virial Hypersurface}


The classical  N-body problem is the original  example of   a natural mechanical system (see  \cite{AbMa}), by which we mean a dynamical system   determined
by choosing a potential function and a Riemannian metric on a manifold.  Such a system has conserved energy 
\beq E = \text{kinetic} + \text{potential} =  \frac{1}{2} \langle \dot q , \dot q \rangle - U(q). 
 \Leq{eq: energy} 
 and  evolves by    Newton's
 equations 
\beq \ddot q =  \nabla U (q).
\Leq{N1}
The moving point   $q = q(t)$ traces out a path in the manifold.  We write $\dot q$ for the 
  velocity of the path.  
   The first summand of
  the energy \eqref{eq: energy} is the kinetic energy  $K =  \frac{1}{2} \langle \dot q , \dot q \rangle  = \frac{1}{2} \| \dot q \|^2$.
  The second summand  is the potential energy   so that the function $U(q)$
  is the negative of the potential energy at $q$.   

The   Riemannian manifold for the N-body  problem 
is a Euclidean space $\E$.   As a vector space  
$\E = \R^3 \times \ldots \times \R^3  = \R^{3N}$.  We write points 
$q \in  \E$ as  $q = (q_1, q_2 \ldots , q_N)$ where  $q_a \in \R^3$ represents
 the (instantaneous)  location of the $a$th body.   Then a velocity $\dot q = 
 (\dot q_1, \dot q_2 \ldots , \dot  q_N) \in \E$ represents the velocities of all $N$  bodies.  
 The  inner product  on $\E$   is related to the standard expression for  kinetic energy $K$ by
 \[ 2 K(\dot q) =   \| \dot q \|^2 = \sum m_a | \dot q_a  |^2. \] 
 Write $r_{ab} = |q_a - q_b|$. Then
 \beq
  U(q) = G \sum_{a < b} \frac{m_a m_b}{r_{ab}} .
  \Leq{potential}
  is the  negative of the standard  potential function used for the defining the N-body problem.  We have that    $U : \E \to \R \cup \infty$.  
   Points $q$ at which  $U(q) =   + \infty$ are  collision points as they   represent configurations where  two or more  of the N bodies 
have collided (some $r_{ab} = 0$).  
 $U$ is continuous  on $\E$ and  analytic away  from collisions. It is positive everywhere.   The coefficient $G >0$ is
 called the universal gravitational constant and is needed for $U$ to have units of energy. 
 The gradient   $\nabla U$ of equation \eqref{N1} is the gradient of $U$ with respect to the inner product on
 $\E$ and is well-defined and analytic everywhere except at collisions.

A `brake instant' for a solution $q(t)$ to equation \eqref{N1}  is a time $t_*$ such that $\dot q (t_*) =0$:    all    N bodies are instantaneously at rest. 
A    `brake orbit' is a    solution   with  a brake instant.   Note that the kinetic energy $K = 0$ at a brake instant.
It follows that the   energy $E$ of 
a brake orbit is  a negative constant  $E = -h$  where   $h = U(q(t_*)) > 0$ is  the value of  $U$  at the brake instant.

Brake orbits are easy to come by using a numerical integrator.  Integrate  \eqref{N1} with initial conditions for which
$\dot q = 0$.    Brake orbits typically 
come very close to the collision locus $U = + \infty$, this being the locus
where two or more of the bodies have collided.   
  You can find a popular account of brake orbits in \cite{Dropping} and a detailed studies  in \cite{Li}
  and \cite{Hristov}, where tens of thousands of periodic three-body brake orbits were found.
  You can find a proof of the existence of countably infinite families of periodic isosceles brake
  orbits for three bodies in \cite{Chen}.

Any solution $q(t)$,  brake or not, with energy  $E  = -h$ 
must satisfy the constraint  $U(q(t)) \ge h$ as follows immediately
 follows  from $E = K - U$ and $K \ge 0$.   
 We call  the domain $\{q : U(q) \ge h \} \subset \E$  
 the {\it Hill region}.  Energy $E = -h$ solutions $q(t)$ must travel inside the  Hill region.

\begin{theorem} [Lost Theorem]  Consider  the Newtonian N-body problem at fixed negative energy $E = -h$.
Let $q_0 \in \E$ be a point  in the corresponding Hill region  $\{U  \ge h \}$.    There exists an  energy $E$ brake orbit passing through $q_0$. 
\end{theorem}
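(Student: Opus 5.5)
The plan is to use the Jacobi--Maupertuis reformulation of mechanics. At energy $E=-h$, solutions of \eqref{N1} are, up to reparametrization, geodesics of the degenerate metric
\[ ds_{JM}^2 = 2\,(U(q)-h)\,\langle dq, dq\rangle \]
on the Hill region $\{U\ge h\}$. The metric degenerates exactly on the Hill boundary $\{U=h\}$, and a brake instant is precisely a time at which the orbit touches this boundary. If $U(q_0)=h$ there is nothing to prove: I solve \eqref{N1} with $q(0)=q_0$ and $\dot q(0)=0$. So assume $U(q_0)>h$. Following Seifert's classical argument for brake orbits, I will minimize the Jacobi--Maupertuis length
\[ L_{JM}(\gamma)=\int \sqrt{2(U(\gamma)-h)}\,\|\dot\gamma\|\,ds \]
over absolutely continuous paths $\gamma$ in the Hill region that start at $q_0$ and end on $\{U=h\}$.

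\textbf{Step 1 (finiteness and reduction).} By homogeneity of degree $-1$, $U(\lambda q_0)=U(q_0)/\lambda$. Hence the segment $\lambda q_0$, $1\le\lambda\le U(q_0)/h$, reaches the boundary and gives a finite upper bound $D$ for the infimum. Using translation invariance, I then restrict to the subspace where the center of mass is fixed.

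\textbf{Step 2 (existence of a minimizer).} I take a minimizing sequence, parametrize it proportionally to JM arclength, and extract a uniform limit by Arzel\`a--Ascoli; the JM length functional is lower semicontinuous. This step requires a priori confinement of the sequence to a compact set.

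\textbf{Step 3 (regularity).} I show the minimizer is a genuine solution. Away from the boundary and away from collisions, a length minimizer is a JM geodesic, hence a reparametrized solution of \eqref{N1}. Interior collisions are excluded by Marchal's theorem, or rather its fixed-energy (Maupertuis) analogue: a collision point can be locally shortcut. At the endpoint, a first-variation argument shows that the minimizer hits $\{U=h\}$ orthogonally, i.e.\ along $\nabla U$. In Newtonian time, $K=U-h\to0$ at that endpoint, so $\dot q\to 0$ and the endpoint is a brake instant. Running time backwards from this brake point, by reversibility, yields a brake orbit through $q_0$. To make the endpoint analysis rigorous, I would apply Seifert's trick of inverting the problem: start the Newtonian flow at the boundary point $p$ with zero velocity, note that for small times $q(t)=p+\tfrac{t^2}{2}\nabla U(p)+O(t^4)$, and match this with the minimizer.

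\textbf{Main obstacle.} I expect the hardest point to be Step 2, the compactness, since $\{U\ge h\}$ is unbounded even modulo translations: a tight binary pair can keep $U\ge h$ while other bodies wander off. My first attempt would be to show that escaping to Euclidean infinity costs JM length exceeding $D$, unless the path first comes JM-close to the boundary. In the latter case I would truncate at the near-boundary point and continue along a short radial segment $\lambda q$ to reach $\{U=h\}$; since $U-h$ is small there, this costs little JM length. This argument needs a quantitative estimate that $\sqrt{U-h}$ stays bounded below along the escaping portion. Failing that, I would fall back on restricting the problem to a large compact sublevel set and showing the minimizer lies in its interior.
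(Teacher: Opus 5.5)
Your route is the same as the paper's. You minimize Jacobi--Maupertuis length from $q_0$ to the Hill boundary; the paper phrases this as minimizing distance to the single brake point $B$ of the completion $M_h$, backed by Theorem \ref{thm 1}. You then exclude interior collisions by a Maupertuis variant of Marchal's lemma and reparametrize. However, Step 2 is a genuine gap: you flag it but do not close it, and your proposed repairs fail as stated.

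There is no lower bound on $\sqrt{U-h}$ along an escaping portion. The boundary $\{U=h\}$ is unbounded (e.g.\ a binary with $U_{12}$ slightly above $h$ while the other bodies recede). A path can slide off to infinity just inside it with arbitrarily small JM length, so bounded length does not confine paths.

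The radial shortcut does not fix this. From $p$ with $U(p)=h+\delta$, the segment $\lambda p$, $1\le\lambda\le 1+\delta/h$, costs up to $\sqrt{2\delta}\,|p|\,\delta/h$, which is not uniformly small in $p$. If you truncate at the first entry into $\{U-h\le\delta\}$ (so $|p|\lesssim |q_0|+D/\sqrt{2\delta}$), you only get competitors that are $O(\delta)$ longer and lie in balls of radius $\sim\delta^{-1/2}$. These balls blow up as the error tends to zero, so you never get a minimizing sequence in a fixed compact set. Your fallback just relocates the problem to showing that a constrained minimizer avoids the artificial wall.

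Two ingredients close the gap.

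(i) A uniform collar estimate: $|\nabla U|\ge c>0$ on $\{h\le U\le h+\epsilon_0\}$. To see this, pass to a cluster decomposition and apply Euler's identity inside a cluster containing at least two bodies. Then flowing along $-\nabla U/|\nabla U|$, rather than radially, gives $d_\partial(p)\le C\,(U(p)-h)^{3/2}$ for every $p$ in the collar, however far out. Here $d_\partial$ is JM distance to $\{U=h\}$.

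(ii) A triangle-inequality argument. Let $L^*=d_\partial(q_0)$. If $L(\gamma)\le L^*+\eta$ with $\eta<\rho$, then after $\gamma$ first reaches $\{d_\partial\le\rho/2\}$ it never returns to $\{d_\partial\ge\rho\}$, since otherwise $L(\gamma)\ge(L^*-\rho/2)+\rho/2+\rho$.

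By (i), the part of a near-minimizer where $d_\partial\ge\rho$ lies in $\{U-h\ge\delta(\rho)\}$ for some $\delta(\rho)>0$. There JM length dominates $\sqrt{2\delta(\rho)}$ times Euclidean length, so this part lies in a fixed ball. Apply Arzel\`a--Ascoli to these parts, diagonally in $\rho$. The remaining tails stay within $\rho$ of the collapsed boundary, which is exactly why $M_h$ is the natural setting. This yields a minimizer.

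Finally, (i) also gives $U(\gamma(s))-h\ge c'(L^*-s)^{2/3}$ along the minimizer parametrized by JM arclength. Hence its Euclidean length $\int ds/\sqrt{2(U-h)}$ is finite, and it lands on an actual point of $\{U=h\}$. Your Step 3 endpoint analysis presupposes exactly this.
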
 
\noindent  For a proof of the last theorem see the end of subsection \ref{sec: recollections}
below.  See figure \ref{fig: HillRegion2} for a depiction of the Hill region for the planar three-body problem.

Why call this   a lost theorem?  It is  found in my   paper    \cite{Filling} and
earlier   in the joint paper   \cite{{Brake-to-Syz}}. In the  intervening  12 years
between these two papers my co-authors and I lost and forgot this theorem.  
The refound theorem  above was a by-product of month or so  of   Zoom discussions with a group of 
colleagues during the pandemic.  

The  brake solutions  guaranteed by  the theorem     have energy $E = -h$ and  $U  = h$ 
at the brake instant 
since   $K  =  0$ at this instant.     We call the hypersurface $\{ U = h \}$ the {\it Hill boundary}.
All energy  $E = -h$ brake orbits hit their  Hill boundary orthogonally, and then retrace their path:
$q(t_* + s) = q(t_* - s)$ where  $t_*$ is the brake instant.

\vskip .3cm

Deep inside  the Hill region, at the   other extreme
from  the Hill boundary,    lies the   collision locus $\{U = + \infty \}$. 
 Halfway between
the  level sets $\{ U = h\}$ and $\{U = + \infty \}$  
lies the level set defining the  
\beq
\text{ virial surface:} = \{q : U(q)  = 2h \}
\Leq{eq: virial}
Numerical experiments and   common sense 
suggest  that all  brake orbits   get  quite  close to the collision locus.   If this is true,
then all brake orbits must cross the virial hypersurface.    There are many periodic
brake orbits (\cite{Dropping, Li}) and  all of them      cross the virial hypersurface as
follows from 
\begin{theorem}[Virial theorem, periodic version]  If $q(t)$ is a periodic solution to Newton's N-body equation  
then its total energy $E = -h$ must be negative. 
Moreover the average $\langle U \rangle$ of $U(q(t))$ over one period is $2h$   so that 
 $q(t)$ must oscillate  about the virial surface in the sense that it must
cross the   level set $U = 2h$ at least twice per period.
\label{periodic virial}
\end{theorem}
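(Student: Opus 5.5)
The plan is to use the Lagrange--Jacobi identity for the moment of inertia $I(q) = \langle q, q\rangle = \sum m_a |q_a|^2$. Differentiating twice along a solution of \eqref{N1} gives $\ddot I = 2\|\dot q\|^2 + 2\langle q, \nabla U(q)\rangle$. The potential $U$ of \eqref{potential} is homogeneous of degree $-1$, so Euler's identity gives $\langle q, \nabla U\rangle = -U$. Hence
\[ \ddot I = 4K - 2U = 4(E + U) - 2U = 4E + 2U. \]
Since $I$ is invariant under simultaneous translation of all the bodies, and a periodic solution must have zero total momentum (otherwise the center of mass drifts linearly), I will first reduce to the case where the center of mass is fixed at the origin. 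Then $I(q(t))$ is a periodic function of $t$.

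Next I will assume the solution is collision-free, so that $U(q(t))$ is finite and continuous. Integrating the identity over one period $T$, the left side gives $\dot I(T) - \dot I(0) = 0$ by periodicity. Therefore $0 = 4ET + 2\int_0^T U\,dt$, that is, $\langle U\rangle = -2E = 2h$. Because $U > 0$ everywhere, $\langle U \rangle > 0$ and so $E<0$. This settles the first two assertions. The energy is negative unless the solution is constant, but equilibria do not exist for the $N$-body problem since $\langle q,\nabla U\rangle = -U \neq 0$.

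For the crossing assertion, the continuous periodic function $f(t) = U(q(t)) - 2h$ has zero average over a period. So either $f \equiv 0$, or $f$ takes both signs and hence changes sign at least twice per period (a periodic continuous function that changes sign must do so an even number of times). The case $f\equiv 0$ must be ruled out or else counted separately. If $U \equiv 2h$, then $K = U - h = h$ is constant and $\ddot I = 4E + 2U = 0$. Then $I$ is periodic and affine in $t$, hence constant. Such orbits do exist: relative equilibria such as circular Kepler orbits stay on the virial surface. So I expect the honest statement to need the word ``cross'' interpreted to include this degenerate case, or relative equilibria to be excluded.

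The main obstacle is this degenerate case, together with the center-of-mass reduction. The integration itself is routine.
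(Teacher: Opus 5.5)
Your proof is correct and is essentially the paper's argument: you average the Lagrange--Jacobi identity $\ddot I = 4K - 2U$ (equivalently $4E + 2U$) over a period, use periodicity of $\dot I$, get $E<0$ from $U>0$, and set aside the identically-on-the-virial-surface case (relative equilibria) exactly as the paper's proof does. The center-of-mass reduction and the remark about equilibria are not needed, because periodicity of $q(t)$ already makes $\dot I$ periodic, and $\langle U\rangle = -2E$ with $U>0$ gives $E<0$ directly for every periodic solution.
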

\noindent See figure \ref{fig: schematic} for  a depiction of the theorem.

 \begin{figure}
  \includegraphics[width=8cm]{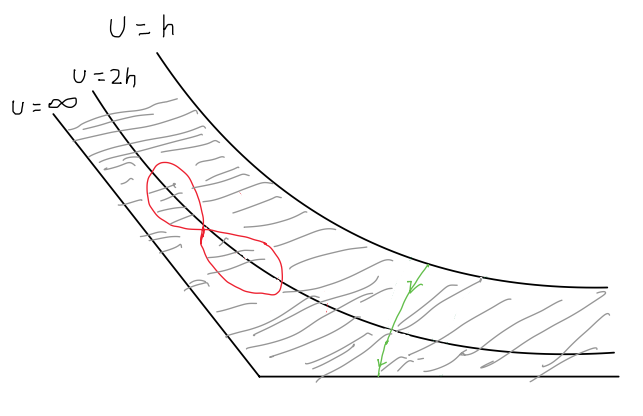}
 \caption{A schematic of the Hill region, shaded with a collision brake orbit
 (green) and non-collision periodic orbit (red) indicated.  } 
 \label{fig: schematic}
 \end{figure}

Here is the  standard proof of the virial theorem.   See Landau-Lifshitz \cite{Landau},   equation (10.5), or the wiki page on the virial equation.

\vskip .4cm

\begin{proof} 

Differentiate $I = \langle q, q \rangle$
twice along the orbit. Once:
$$\dot I = 2 \langle q, \dot q \rangle, $$
twice: 
\begin{eqnarray*}
\ddot I & = 2 \langle \dot q, \dot q \rangle + 2 \langle q, \ddot q \rangle \\
 & = 4 K(\dot q)  + 2 \langle q, \nabla U (q) \rangle \\
  & = 4 K(\dot q)  - 2 U(q)  \\
\end{eqnarray*}
The final  identity  for $\ddot I$ is known as the `Lagrange-Jacobi identity'.  In obtaining the final  line of the identity we used the  fact that $U$ is homogeneous of degree $-1$ 
and Euler's identity for homogeneous functions to replace  $\langle q, \nabla U (q) \rangle$ with $- U(q)$. 

Now $\dot I$ is periodic so the average of $\ddot I$ is zero
and so we can average $K$ and $U$ over the orbit    to get $4\langle K \rangle  -2 \langle U \rangle = 0$
where we  have written  $\langle f \rangle$ for the time-average of  a function $f$ on phase space $\E \times \E$
over the curve $(q(t), \dot q(t))$ in phase space. 
Use that the conserved  energy is $E = K - U = \langle K \rangle - \langle U \rangle$
to conclude that that $4E + 2\langle U \rangle = 0$.
or    $\langle U \rangle = 2h$ where $h = -E$.  In order for $U(q(t))$ to have average $2h$
the solution $q(t)$ must  either lie identically on the virial surface or cross it twice per period. 
And since $U > 0$ we have $\langle U \rangle > 0$ so $E < 0$. 

\end{proof}

 This theorem  suggests  
\begin{oproblem}
Does {\bf every}  solution having negative energy $E = -h$ and defined for all time  cross the virial hypersurface $U = 2h$?
\end{oproblem} 

{\sc Answer:}  No.  Indeed, in the interim between drafts of the currect paper Moeckel \cite{MoeckelLarge} proved:

\begin{theorem}  [Moeckel \cite{MoeckelLarge}] Given any positive constants $C$ and $h$  there are collision-free solutions $q(t)$ of the 
planar three-body problem  having  energy $E = -h$ and for which $U(q(t)) \ge C$  for all $t$.
\label{Moeckel thm}
\end{theorem}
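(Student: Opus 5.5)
Since $U \ge C$ for all time means the three bodies never leave a bounded region in shape-scaled terms, the natural construction is a tight binary together with a third body held close to it, all at total energy $-h$. I would look for an orbit staying near a hierarchical configuration: bodies 1 and 2 form a tight Kepler binary with separation $r_{12}$ of order $\epsilon$, and body 3 moves in a bounded orbit around the binary's center of mass. Placing the third body at distance $O(1)$ would require the binary's Kepler energy to cancel nearly all of the total energy, which is the standard picture. That alone does not give $U\ge C$ for arbitrary $C$, because $U$ is dominated by $m_1m_2/r_{12}$, and $r_{12}$ oscillates along the Kepler ellipse. So I would take the binary nearly circular. Then $r_{12}\approx a$ with $a$ small, giving $U \gtrsim G m_1 m_2/(a(1+e)) \ge C$ once $a$ is chosen small and the eccentricity $e$ is bounded away from $1$.

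The steps in order are as follows.

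First, use scaling. The three-body problem has the scaling symmetry $q\mapsto \lambda q$, $t \mapsto \lambda^{3/2} t$, under which $E \mapsto \lambda^{-1}E$ and $U\mapsto\lambda^{-1}U$. The ratio $U/h$ is therefore scale invariant, and it suffices to fix $h=1$ and find orbits with $U \ge C$ for arbitrary $C$.

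Second, set up the hierarchical system in Jacobi coordinates. Let $x$ be the binary separation and $y$ the position of body 3 relative to the binary center. The Hamiltonian splits as a Kepler problem in $x$, plus a Kepler problem in $y$ with total binary mass, plus a perturbation of size $O(|x|^2/|y|^3)$. Choose the binary Kepler energy $-h_1$ with $h_1$ large and the outer energy $-h_2 = -1 + h_1 > 0$. This makes the outer orbit hyperbolic, which is bad. The fix is to note that $U\ge C$ does not require the outer orbit to be bounded: an escaping third body is fine as long as the binary stays tight. So take body 3 on a hyperbolic or parabolic escape and the binary nearly circular with semimajor axis $a = Gm_1m_2/(2h_1)$, which is small.

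Third, control the binary along the escape. The main obstacle is showing that the binary eccentricity stays bounded away from $1$ for all time, both backward and forward, so that $r_{12} \le 2a$ throughout and hence $U\ge Gm_1m_2/(2a) \ge C$. I would handle this by averaging or a perturbation estimate. The perturbation on the binary is of size $|x|^2/|y|^3$. Along a hyperbolic outer orbit with pericenter distance $R \gg a$, the integrated change in the binary's osculating elements is $O((a/R)^{3/2})$ or smaller. Taking $R$ large relative to $a$ keeps $e$ near $0$ for all time. A Gronwall argument on the osculating elements, using $\int |y(t)|^{-3}dt<\infty$ along a hyperbolic orbit, closes the estimate. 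Collision-freeness follows from the same bounds, since $r_{12}\ge a/2$ and $|y|\ge R/2$. Since $C$ was arbitrary, this proves Theorem \ref{Moeckel thm}, and it also answers the Question negatively.
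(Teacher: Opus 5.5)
Your argument is correct in outline, and it takes a genuinely different route from the one the paper sketches for Moeckel's theorem.

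\textbf{The two routes.} The paper, following Birkhoff, starts at a turn-around point ($\dot I=0$) with fixed $J\neq 0$ and very small $I_0$. It gets $\ddot I(0)>0$ from the blow-up analysis, then uses Birkhoff's escape criteria to force $I$ to increase to $\infty$ in both time directions. Binary collisions are handled by regularization or a measure-zero argument. You instead start far from triple collision: a tight binary of energy $E_1\approx -h_1$, and a third body on a fast hyperbolic flyby with pericenter distance $R\gg a$. You then control this single encounter perturbatively. Both routes yield two-sided hyperbolic-elliptic orbits, and in both $I$ is strictly convex, since $\ddot I=2U-4h>0$ exactly when $U>2h$.

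\textbf{What each buys.} Your version is more elementary and gives an explicit open set of initial data. It excludes collisions directly, with no regularization, and works for any masses and equally in space. Moeckel's version shows in addition that such orbits can pass arbitrarily close to triple collision, i.e.\ survive a genuine close triple encounter, which a perturbative flyby cannot reach.

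\textbf{Two points to tighten.}

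First, eccentricity is irrelevant to the lower bound on $U$. We have $r_{12}\le a(1+e)\le 2a$ for every $e<1$. More simply, $Gm_1m_2/r_{12}=\tfrac12\mu_1|\dot x|^2-E_1\ge -E_1(t)$, so $U\ge -E_1(t)$ (here $\mu_1,\mu_2$ are the Jacobi reduced masses). What you must control is therefore the binary energy; $e$ matters only for keeping $r_{12}$ away from $0$.

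Second, your closing step uses $\int|y|^{-3}\,dt<\infty$ along an unperturbed hyperbola. But $y(t)$ is itself perturbed, so the outer orbit must be part of the bootstrap. This is easy to do. Put $E_2=E-E_1-P$ with $P=O(|x|^2/|y|^3)$. Then $\frac{d^2}{dt^2}\tfrac12|y|^2=2E_2/\mu_2+Gm_3(m_1+m_2)/(\mu_2|y|)-y\cdot\nabla_yP/\mu_2\ge c\,h_1$ as long as $-E_1\ge h_1/2$ and $|y|\gg a$. Starting at outer pericenter, this gives $|y(t)|^2\ge R^2+c\,h_1t^2$, hence $\int|y|^{-3}\,dt\lesssim R^{-2}h_1^{-1/2}$. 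The resulting changes in $E_1/h_1$ and in $e$ are then of order $(a/R)^2$, and the continuity argument closes.
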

 
We will  return to  Moeckel's theorem  in  section \ref{sec: escape} below.
Let us now continue to follow the virial thread.  Any one of the three  equivalent  equations $2\langle K \rangle  -  \langle U \rangle = 0$ , 
$ \langle U \rangle = -2E $ and  $ \langle K \rangle =  E $ which we  proved when we proved theorem \ref{periodic virial}  are   known as the virial equation.
The virial equation, with averages properly formulated,   is  also   valid for aperiodic solutions under a rather large
range of circumstances.   However, the virial equations  are clearly not valid for Moeckel's solutions since  
whatever the average $\langle U \rangle$ might mean,  we must  have that  $\langle U \rangle \ge C$  if $U(q(t)) >C$ for all $t$,
so we cannot have $\langle U \rangle = 2h = -2E$.

 To define  the average 
$\langle f \rangle$ of a function $f$ such as $K$ or $U$ on phase space $\E \times \E$ over an aperiodic   solution $q(t)$
we can  average $f$ over increasingly long orbit segments of $q(t)$ and
take the limit:
$$\langle f \rangle = \lim_{T \to \infty} \frac{1}{2T}\int_{-T} ^T f(q(t), \dot q(t)) dt$$
a limit which  may or may not exist.  For our purposes the state-of-the-art virial theorem  was proven  by Pollard \cite{Pollard}. 

\begin{theorem} [Pollard's Virial theorem]   
Let $q(t)$ be a    solution to Newton's equations  defined for all time.
If $I(t) = o(t^2)$ as $t \to \pm \infty$
then both averages $\langle K \rangle$ and
$\langle U \rangle$ over $q$ exist as defined immediately above and they  satisfy  the virial equation.
Conversely, if these  limiting averages exist and satisfy  the virial equation   along $q$ 
then  $I(t) = o(t^2)$ as $t \to \pm \infty.$
In particular, the virial equation  holds for all   bounded collision-free solutions. \label{Pollards virial}
 \end{theorem}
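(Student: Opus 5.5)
The plan is to integrate the Lagrange--Jacobi identity from the proof of Theorem \ref{periodic virial} over $[-T,T]$ and divide by $2T$, instead of averaging over a period. Combining $\ddot I = 4K - 2U$ with $E = K - U$ gives the two forms
\[ \ddot I = 4E + 2U = 2K + 2E . \]
Integration then yields
\[ \frac{\dot I(T) - \dot I(-T)}{2T} = 4E + \frac{2}{2T}\int_{-T}^T U\,dt = 2E + \frac{2}{2T}\int_{-T}^T K\,dt. \]
So $\langle U\rangle$ and $\langle K\rangle$ exist and satisfy the virial equation ($\langle U\rangle = -2E$, $\langle K\rangle = -E$) if and only if $\bigl(\dot I(T) - \dot I(-T)\bigr)/T \to 0$. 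The whole theorem therefore reduces to relating this quantity to the growth hypothesis $I = o(t^2)$.

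For the forward direction, the key fact is that $I$ is convex, or at least eventually convex, so that $\dot I$ is controlled by $I$. I would use the second form $\ddot I = 2K + 2E$. When $E\ge 0$ this shows $\ddot I \ge 0$, so $I$ is convex and $\dot I$ is monotone. A convex function with $I = o(t^2)$ satisfies, for $t>0$,
\[ \dot I(t) \le \frac{I(2t) - I(t)}{t} = o(t), \]
and a matching lower bound $\dot I(t) \ge \bigl(I(t) - I(0)\bigr)/t$ from the same side. Hence $\dot I(T)/T \to 0$, and symmetrically $\dot I(-T)/T \to 0$.

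For $E<0$ convexity fails, and this is the step I expect to be the main obstacle. I would first try the following argument. Put $J = I + |E|t^2$, so that $\ddot J = 2K \ge 0$; thus $J$ is convex and still $O(t^2)$. This only gives $\dot I = O(t)$, not $o(t)$, so an additional argument is needed. Since $\dot I^2 = 4\langle q,\dot q\rangle^2 \le 8IK$ by Cauchy--Schwarz, I would combine this with $K = U + E$ and $\ddot I = 2K + 2E$. One route is to use the Sundman-type inequality $I\ddot I - \tfrac12\dot I^2 \ge 2EI$, which follows from the same Cauchy--Schwarz bound, to derive a differential inequality for $\sqrt I$ that forces $\dot I/t \to 0$ whenever $I/t^2 \to 0$. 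Concretely, for $\rho = \sqrt{I}$ we get $\ddot\rho \ge E/\rho$-type control. The worry is that bounded $I$ with large oscillating $\dot I$ might be possible. I would rule this out by averaging: whenever $|\dot I(T)| \ge \epsilon T$ at large times, integrating the bound $\ddot I \ge 2E$ shows that $I$ must grow like $\epsilon T^2$ on an interval of length comparable to $T$, contradicting $I = o(t^2)$. This averaging argument is the part to nail down carefully.

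For the converse, if the averages exist and satisfy the virial equation, the displayed identity gives $\dot I(T) - \dot I(-T) = o(T)$. Pollard's statement needs the one-sided limits, so I would use the one-sided averages over $[0,T]$ and $[-T,0]$, defining them in that form if necessary. This gives $\dot I(T) = o(T)$, and integrating once more yields $I(T) = o(T^2)$. Finally, a bounded collision-free solution has $I$ bounded, which is trivially $o(t^2)$, so the last claim follows from the forward direction.
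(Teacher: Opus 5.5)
The paper gives no proof of this theorem; it only cites Pollard. So the only benchmark is its periodic argument. Your reduction extends that argument correctly: integrating Lagrange--Jacobi over $[-T,T]$ shows that the virial equation for the symmetric averages is equivalent to $\bigl(\dot I(T)-\dot I(-T)\bigr)/T\to 0$.

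Your forward direction is sound. The averaging argument you end with works for every $E$, since $\ddot I=2K+2E\ge 2E$ always. But it is nothing more than convexity of $J=I+|E|t^2$, used with secants of length $\delta t$. That gives $|\dot I(t)|\le |E|\delta t+o(t)/\delta$, hence $\dot I=o(t)$. So your remark that $J$ ``only gives $O(t)$'' is mistaken.

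Drop the Sundman detour entirely, because it contains errors:
\begin{itemize}
\item Cauchy--Schwarz gives $I\ddot I-\tfrac14\dot I^2\ge 2EI$, not the version with $\tfrac12$.
\item $\ddot\rho\ge E/\rho$ is false. For a homothetic collapse, $\ddot\rho=(E-\tfrac12\dot\rho^2)/\rho$, which is strictly less than $E/\rho$ whenever $\dot\rho\ne 0$.
\end{itemize}

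The genuine gap is in the converse. The statement uses the symmetric averages $\frac{1}{2T}\int_{-T}^{T}$ defined in the paper, so the hypothesis gives you only $\dot I(T)-\dot I(-T)=o(T)$. You then replace it by the existence of the two one-sided averages with virial values. That is a priori a stronger hypothesis, and their equivalence is exactly what needs proof. So you have proved a weaker converse than the one stated. Your justification, that the one-sided limits are needed, is also wrong.

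The missing observation is that positivity of $I$ separates the two ends of time:
\begin{itemize}
\item $\frac{d}{dT}\bigl(I(T)+I(-T)\bigr)=\dot I(T)-\dot I(-T)=o(T)$.
\item Integrating gives $I(T)+I(-T)=2I(0)+o(T^2)$.
\item Since $I\ge 0$, each of $I(T)$ and $I(-T)$ is $o(T^2)$.
\end{itemize}
With this line the converse holds exactly as stated, with no redefinition of averages. Your forward argument then also yields $\dot I(\pm T)=o(T)$, so the one-sided averages exist a posteriori. The final ``in particular'' clause about bounded collision-free solutions is fine.
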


 Pollard's $I(t) = o(t^2)$ required bound
 holds  for all  bounded solution ( $I(t) = O(1)$) in addition to the periodic solutions. 
 It also holds for   parabolic solutions
($I(t)  = O(t^{4/3}))$.  For   parabolic solutions the energy $E$ is zero and
 so  we get   $\langle K \rangle = \langle U \rangle = E = 0$.   For a bounded solution
  we have $U > C$ for a positive constant and thus  $\langle U \rangle > 0$ and the solution's energy is  $E = -h < 0$.  This
  discussion  suggests 
   a modification
  of our first problem.
  
  \begin{oproblem}
Does {\bf every}  bounded   solution   cross its virial hypersurface $U = 2h$?  
\end{oproblem}

{\sc A possible `no'.}  Here is a cartoon  scenario suggesting the existence of  a solution  which answers  `no'   to question 2.    The bell curve of statisitics
provides an example of a function which is  positive everywhere but whose  average over the  real line is $0$.  In a similar way we could have a bounded solution
$q(t)$ which is heteroclinic  between  two unstable rotating relative equilibria at the given energy $E = -h$.  Both relative equilibria have constant $U$
and so have    $U = 2h$ by the periodic virial theorem.  But it could  happen that $U(q(t)) > 2h$ all along
our heteroclinic solution despite the fact that $lim_{t \to \pm \infty} U(q(t)) = 2h$ and $\langle U \rangle = 2h$.

    \begin{remark} Pollard's equations (6) and (8) are hard to read in the JSTOR copy
  since the dots do not show up in the copy.  In particular the left hand side of (6)
  is $\ddot I$ not $I$.   (This equation is a version of the Lagrange-Jacobi equation.)       \end{remark}

   \begin{remark} 
   Pollard's  bound, $I(t) = o(t^2)$ fails for Moeckel's solutions which 
  instead   satsify $I(t) = C t^2 + o(t)$ for $t$ large and for a positive $C$.  
As  discussed already    the virial equation does not  hold for such solutions and  
instead we get  that $2\langle K \rangle  -  \langle U \rangle > 0$.
\end{remark}

  \begin{remark}
  {\sc A modified hyperbolic virial.}  Moeckel's solutions are of hyperbolic-elliptic type.
  Such solutions have one body escaping to infinity while the remaining pair remains bound  in 
  a near-Keplerian periodic orbit about its center of mass.  The (Jacobi) vector connecting the escaping body 
  to this center of mass of the bound  has a limiting  non-zero velocity $v_{\infty}$.  
   A bit of analysis and some algebra shows that
  $$2 \langle K \rangle  - \langle U  \rangle = 2K_{hyper} >0 $$
   where $K_{hyper} = \frac{1}{2} \mu \| v _{\infty} \|^2$.
 The constant  $\mu >0$ is the usual  
  reduced mass corresponding to the kinetic energy splitting induces by
  our Jacobi vector choice.  The averages of phase functions $f$ are redefined as $\lim_{T \to \infty} \frac{1}{T} \int _0 ^T f(q(t), \dot q(t))dt$
  so as to  only concern  the positive time  escaping half- trajectory.    If 
  both halves  of the orbit are  hyperbolic-elliptic escape orbits and if they have  different asymptotic velocities and hence different limiting hyperbolic escape kinetic energies 
  $K_{hyper} ^+ ,  K_{hyper}^-$   
  we get  $2 \langle K \rangle  - \langle U  \rangle = K_{hyper} ^+ +  K_{hyper}^- > 0$
 upon reverting to the original $\lim_{T \to \infty} \frac{1}{2T} \int _{-T} ^T f(q(t), \dot q (t)) dt$ definition of the average.
 \label{hyperbolic-elliptic}
   \end{remark}
  
    \begin{remark} The virial equation and its many interpretations  has
been   important in the development of statistical mechanics, galactic dynamics
and   the discovery of dark matter.  See the wiki page on ``virial equation'' as of 2026. 
  \end{remark}

\vskip 1cm
\subsection{More virial musings} 
{\it  Are there  
solutions which   live their whole  lives on the
virial hypersurface? }

{\bf Yes.} The relative equilibria are periodic solutions for which 
$U = 2h$ throughout.    These are  solutions which evolve by rigid rotation,
each body tracing a circle about their common center of mass.  The
fiducial example is the  Lagrange solution for $N=3$  whose time evolution is that
of an equilateral triangle evolving  by  
rigid rotation.   For any $N$ and any mass distributions
there exist   relative equilibria, the number of them growing with $N$.  They correspond to
  planar central configurations. See \cite{MoeckelCC}.

{\it Are there any other solutions besides the relative equilibria which stay on
the virial hypersurface?   } 

 A conjecture of Saari, proved for N = 3 by Moeckel \cite{SaariConj1} 
asserts that `{\bf no}' ,  the  relative equilibria  are the only solutions which
satisfy $U(q(t)) = 2h$ for their whole existence.     

\vskip .6cm

\subsection{Kepler-inspired virial annuli} 
 Each relative equilibrium is one solution   of a one-parameter  {\it Keplerian family} of
 homographic solutions. In these solutions each body travels a Keplerian ellipse, with 
 the entire configuration evolving by  a periodic  time-dependent  scaling and rotation.
 We can parameterize the elements of this 
 family  by their angular momentum, with the value at which they are relative
 equilibria being the maximum  value of angular momentum $|J|$ in the family for the given fixed (negative)  energy. 
   If we keep track of $U(q(t))$
 along such a  Keplerian family   we find 
\beq
 \frac{2h}{1+k}   \le U(q)  \le \frac{2h}{1 - k}.
 \Leq{U constraint} 
 with the endpoints achieved and where $k = k(J)$ with  $k \to 1$ as $J \to 0$.
 The elements of the family having $J = 0$ and so $k =1$
 are brake solutions ending in total collision. 
 \begin{definition}  Call the region defined by \eqref{U constraint} the virial annulus
 ${\mathcal A}_k$. 
 \end{definition} 
   
  \begin{definition}  Call the {\it thickness} of a solution $q(t)$  to \eqref{N1}
 the smallest positive  $k$ such that \eqref{U constraint} holds along the entire solution
 \end{definition}

  These considerations suggest 
 \begin{oproblem}   Introduce the real parameter $k \in [0,1]$.
 Consider the space  ${\mathcal B}_k$  of all energy $E = -h$ bounded solutions
 which satisfy the constraint \eqref{U constraint}. 
 As $k$ increases   how does  the ``number'' and ``type'' of different 
 orbits in ${\mathcal B}_k$ grow?   By what mechanisms do  new solutions get added   to 
 ${\mathcal B}_k$ as $k$ increases?  
 \end{oproblem}
 
 Suppose that  the bodies move in the plane instead of space
 so that we replace our Euclidean space $\E = (\R^3) ^N$ with  $\E = (\R^2) ^N$.
 Then the   interior of the Hill region minus collisions has a large and rich fundamental group $\pi_1$, namely the pure braid group on N strands.
 (See \cite{MeBraids}.)    As we explain in the next paragraph, this entire group  is supported on the virial surface
$VS =  \{ U = 2h \}$ or any of the virial annuli ${\mathcal A}_k$, in the sense that any representative  homotopy class
for any element of $\pi_1$  can be
isotoped so as to lie in $VS$ or in  ${\mathcal A}_k$.
Thus either the virial surface  or any  virial annulus  admits  closed curves
realizing any given braid type as encoded by $\pi_1$.  
  The question above is asking  how many of these braid types, as a function of $k$ are realized by periodic solutions
in ${\mathcal A}_k$.    

We now show that    $\{ h \le U < \infty \}$
 is diffeomorphic to $VS \times I$ 
 where
 $VS =  \{ U = 2h \}$ is the virial surface of equation \eqref{eq: virial} and where $I$ is an interval.  To see this 
 use the homogeneity $U(\lambda q) = \frac{1}{\lambda} U(q)$.
 The scaling map   $(q, \lambda)  \mapsto \lambda q$ with $q \in VS$ 
maps $VS \times (0,  2] \to \E$ onto $\{ h \le U < \infty \}$ and is a diffeomorphism.
(The  map takes $VS \times \{\lambda \}$ to the level set $\{ U = \frac{2h}{\lambda} \}$.)
It follows that the interior of our Hill region   deformation retracts onto   $VS$ or onto any one of the  virial annuli ${\mathcal A}_k$
and hence both of these support all of $\pi_1$ as discussed above.

\begin{remark}[A Keplerian ruler] Within the Hill region $U$ can take on any value between $h$ and $\infty$.
A good ``Keplerian'' ruler for range $[2h, \infty]$ of $U$ in the Hill region is found by
relating  the scaling $\lambda \in (0,2)$  to the thickness $k$ by writing $\frac{1}{\lambda} = \frac{1}{1+k}$
or $k = 1+ \lambda$, thus rescaling the  $\lambda$-interval $(0,2]$ of the paragraph immediately above
to  the $k$-interval $(-1,1]$ appropriate to the Kepler constraint \eqref{eq: virial}. 
Measured with the  $k$-ruler  the value $k = -1$ corresponds to the collision locus and  $k =1$ corresponds to the Hill boundary
while $k = 0$ is the virial surface. 
\end{remark}

 The relative equilibria, lying on the virial surface,  represent the center of $\pi_1$.
 My main  interest here is in  how the subset $C_k$ of conjugacy classes of $\pi_1$ 
 realized by periodic solutions of thickness $k$  grows with $k$.    The figure eight
 with zero angular momentum has a quite small thickness.  We have the Hurewicz Abelianization
  map from $\pi_1 \to H_1 (VS)$ (where $H_1$ denotes homology).  The figure eight lies in the kernel of this map, being null-homologous.
 Moeckel and I showed that eventually all braids do get realized for $N=3$. See \cite{allRealized}.
The solutions we established are near collision, so have $k$ arbitrarily close  to $1$.

 I  have  no clue how to attack  question 3. 
My knee-jerk reaction is  to   build a  variational set-up  centered around the Jacobi-Maupertuis metric
to attack the question.  The point of the question  is to  view the thickness  $k$ of a solution
as an alternative   bifurcation parameterto the   traditional angular momentum
 $J$ (cf. Moeckel \cite{MoeckelQual}).  
As long as thickness  $k$ is less than $1$  the constraint  \eqref{U constraint} excludes both
 brake and collision solutions from being in ${\mathcal B}_k$.    It is not until $k = 1$ that we get all solutions. What  if $k = 1 - \eps$
 for some fixed  small positive $\eps$?   Might  it be that all possible   topological types of periodic  curves
 are realized by curves of this  thickness?
 
 One could add more parameters in addition to thickness :  the angular momentum, the maximum of $I$ along an orbit,
 or the degree $\alpha$ of homogeneity of $U$.   
 These additions could make the consequent  `realizing of $\pi_1$'' questions easier or harder.  
 
 \begin{remark} If we replace the standard Newtonian potential with one which is homogeneous of degree  $\alpha = 2$ 
 these  questions become  particularly easy and special and are fully solved. The modified
 Lagrange-Jacobi equation becomes   $\ddot I = 4H$, consequently the virial hypersurface degenerates.   Bounded solutions
must have  
energy $H = 0$ and  $I(t) = const$.    Almost all braid types  are realized by  periodic solutions. 
 See \cite{MeBraids}, \cite{hyperbolicPants}.   
\end{remark}

\section{ The Jacobi-Maupertuis Metric }

\subsection{Recollections and a proof} 
\label{sec: recollections}
We recall the Jacobi-Maupertuis [JM]  approach to mechanics.   Assume the system
evolves by equation \eqref{N1}.  Focus our attention on solutions
having a fixed energy $E = -h$.  
Form the metric
\beq ds^2_E =  2(E   - V(q)) ds^2 _{\E} = 2(-h   + U(q)) ds^2 _{\E}
\Leq{JM}
  on the Hill region $\{h \ge U\}$ (which equals 
$\{ -\infty \le V \le E \} $ where $V = - U$ is the potential, and  
where $ds^2 _{\E}$ is the mass metric viewed as a Euclidean (flat) Riemannian metric.

 \begin{figure}
  \includegraphics[width=8cm]{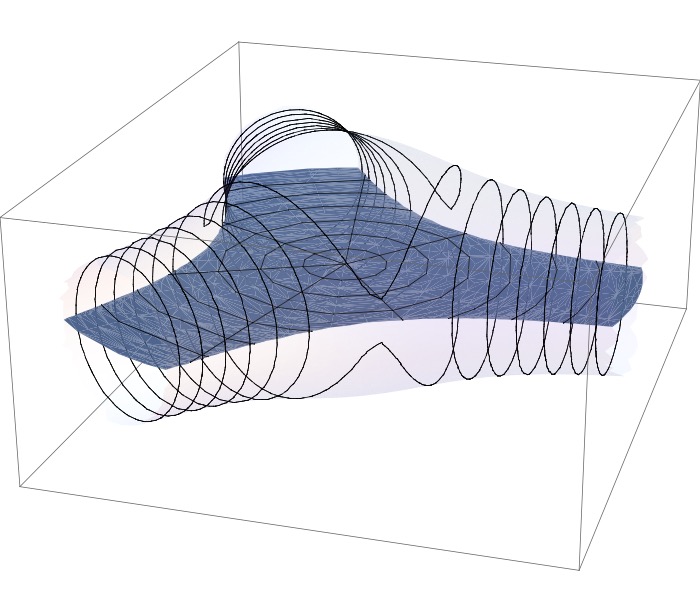}
 \caption{(Courtesy of Rick Moeckel.)  The  Hill region $\{ U \ge 1\}$ projected onto three-body shape space
 resembles  a   plumbing fixture made of  three pipes joined  at the origin.   
 The origin represents triple collision. The three  rays issuing from the origin
 about which  each pipe is centered  represent the binary collision locus.  For details regarding shape space and this picture see \cite{shape}.} 
 \label{fig: HillRegion2}
 \end{figure}

 \begin{theorem} [Jacobi-Maupertuis principle]  Away from   collisions  and brake  points, the energy $E$  solutions to Newton's equations
are reparameterizations of  geodesics for the Jacobi-Maupertuis metric (\ref{JM}) on the    Hill region.     Conversely, away from the collisions  and   the Hill boundary,   geodesic
for this Jacobi-Maupertuis metric are reparameterizations of 
 energy $E$ solutions of Newton's equations.
 \end{theorem}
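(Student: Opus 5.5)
The plan is to compare the Euler--Lagrange equations of two functionals on paths in the Hill region, away from collisions and from the Hill boundary. The first is the Jacobi--Maupertuis length functional
$$L_{JM}(c)=\int \sqrt{2(U(c)-h)}\,\|c'\|\,ds,$$
computed with the Euclidean mass metric. The second is the JM energy functional
$$\tfrac12\int 2(U-h)\|c'\|^2\,ds.$$
Geodesics are the critical points of the JM energy, which are parameterized proportionally to JM arclength. Their unparameterized images are exactly the critical points of $L_{JM}$. We write $W = 2(U-h)$, which is positive on the interior of the Hill region. The metric is then the conformally flat metric $g = W\, ds^2_{\E}$.

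The core step is a direct computation of the geodesic equation for this conformal metric. For $g=e^{2\phi}\delta$ with $e^{2\phi}=W$, a geodesic $c(s)$ satisfies
$$c'' + 2\langle \nabla\phi, c'\rangle c' - \|c'\|^2\nabla\phi = 0.$$
Take a Newtonian solution $q(t)$ of \eqref{N1} with energy $-h$, and reparameterize it by $ds = W(q)\,dt$; equivalently $d/ds = W^{-1}d/dt$. The computation has three parts.

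\begin{enumerate}
\item Energy conservation gives $\|\dot q\|^2 = W$, since $K = U - h$.
\item Compute $c' = \dot q/W$ and
$$c'' = W^{-1}\frac{d}{dt}\bigl(\dot q/W\bigr) = W^{-2}\ddot q - W^{-3}\langle\nabla W,\dot q\rangle\dot q.$$
\item Substitute $\ddot q=\nabla U=\tfrac12\nabla W$ and $\nabla\phi = \nabla W/(2W)$, and use $\|\dot q\|^2=W$.
\end{enumerate}

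After this substitution the terms cancel and the geodesic equation holds identically. This proves the forward direction. The only hypothesis used is $W>0$, which is why brake points and collisions are excluded. At collisions $U$ is not smooth; at brake points $W=0$ and the reparameterization degenerates.

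For the converse, start with a JM geodesic $c(s)$ away from collisions and the Hill boundary, parameterized so that $g(c',c')=\mathrm{const}$. By scaling $s$ we may assume $W\|c'\|^2 = W^2$, that is, $\|c'\|^2 = 1/W$. This normalization is the step I expect to require care. The chosen constant must be compatible with energy $-h$, and one rescaling of the parameter does achieve it, because geodesics may be affinely reparameterized.

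Next define $t$ by $dt = W\,ds$ and set $q(t)=c(s)$. Then $\|\dot q\|^2 = W^2\|c'\|^2 = W$, which says exactly that $q$ has energy $-h$. Running the computation above backwards yields $\ddot q = \tfrac12 \nabla W = \nabla U$, which is Newton's equation \eqref{N1}.

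An alternative route, which I would mention as a cross-check, is variational. The Lagrangian $\tfrac12\|\dot q\|^2+U$ is restricted to curves at fixed energy $-h$. For such curves $\int 2K\,dt$ equals the JM length, by Cauchy--Schwarz with equality exactly when $\|\dot q\|^2=W$. This is Maupertuis' principle of least action, and it gives the same correspondence. The main obstacle is just bookkeeping: getting the reparameterization factor exactly right, namely $ds=W\,dt$ rather than $\sqrt W\,dt$, given the choice of constant speed.
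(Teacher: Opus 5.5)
Your computation is correct, and it takes a different route from the one the paper relies on. The paper gives no proof of its own; it cites textbooks, and Landau--Lifshitz, for instance, obtains the principle variationally from Maupertuis' least-action principle: at fixed energy the abbreviated action $\int p\,dq$ equals the JM length. Another standard route is Hamiltonian. The level sets $\{H=-h\}$ and $\{\|p\|^2/(2W)=\tfrac12\}$ coincide, and on them the two Hamiltonian vector fields differ by the factor $W$.

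You instead work directly with the geodesic ODE of the conformally flat metric $W\,ds^2_{\E}$. This is legitimate because the mass metric has constant coefficients in linear coordinates. In fact your three steps prove the identity
\[
c''+2\langle\nabla\phi,c'\rangle c'-\|c'\|^2\nabla\phi \;=\; W^{-2}\bigl(\ddot q-\nabla U\bigr)
\]
for every curve with $\|\dot q\|^2=W$ and $ds=W\,dt$, since the cross terms cancel identically. That single identity gives both directions at once. What your route buys is an elementary, explicit check that pins down the time change $ds=W\,dt$, which is exactly JM arclength. What the variational route buys is the characterization of solutions as critical points of JM length. That is the form the paper actually uses: length minimization in the proof of the Lost Theorem, and min-max in the mountain-pass discussion. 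Your Cauchy--Schwarz cross-check is stated loosely. On fixed-energy curves, $\int 2K\,dt$ equals JM length trivially. The inequality $\tfrac12\|\dot q\|^2+\tfrac12 W\ge\sqrt W\,\|\dot q\|$ only matters once the energy constraint is relaxed. Nothing in your main argument depends on it.

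Two bookkeeping slips in your converse need fixing, and they are exactly the kind you warned about.
\begin{enumerate}
\item The normalization $W\|c'\|^2=W^2$ cannot hold as written. The left side is $g(c',c')$, which is constant along a geodesic, while $W^2$ is not. You mean $W\|c'\|^2=1$, which is what gives $\|c'\|^2=1/W$.
\item The time change must be $dt=W^{-1}ds$, i.e.\ $ds=W\,dt$ as in the forward direction, not $dt=W\,ds$. With $dt=W\,ds$ you would get $\dot q=c'/W$ and $\|\dot q\|^2=W^{-3}$.
\end{enumerate}
Your next line, $\|\dot q\|^2=W^2\|c'\|^2=W$, is what the correct choice gives, so with these corrections the argument is complete.

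Your concern about compatibility with the energy is also unnecessary. Given $ds=W\,dt$, one has $\|\dot q\|^2=W\,g(c',c')$. So unit JM speed is equivalent to energy $-h$, and there is no extra choice to make.
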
 
 
 \noindent See 
 Landau-Lifshitz \cite{Landau} p. 141 or    Knauf \cite{Knauf}  p. 178  or Abraham-Marsden \cite{AbMa} p.228  for proofs of the Jacobi-Maupertuis principle.
 See \cite{Whos Afraid} for details of the dynamics near the Hill boundary.
 
 Complete the metric space  on the interior ($ h < U < \infty $) of the
 Hill region which is induced by the (incomplete)  Riemannian  JM metric on this interior.  
 What we  get  is a somewhat strange metric on the entire   Hill region with the  brake ($h = U$)  and collision 
 ($U = \infty$) points added in.
 Travel along the Hill boundary is free: paths on the Hill boundary have zero length relative to the JM metric.
 As a consequence the entire Hill boundary  must be collapsed to a point when we complete the metric.
 We call this single point {\it the brake point}.  
 On the other hand, collisions remain a part of the completed space since there are paths
 of finite JM length touching any collision point. 
 \begin{theorem}  The Jacobi-Maupertuis [JM] metric for the N-body problem at negative energy has  finite diameter.
\label{thm 1}
\end{theorem}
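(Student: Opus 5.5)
Our plan is to bound the JM distance from every point of the Hill region to the brake point by a uniform constant. The triangle inequality then gives diameter at most twice that constant. The natural path from a point $q_0$ to the Hill boundary is the radial scaling ray $\lambda \mapsto \lambda q_0$ for $\lambda \ge 1$. It leaves the Hill region exactly when $U(\lambda q_0) = U(q_0)/\lambda = h$, that is at $\lambda_* = U(q_0)/h$. Any path ending on the Hill boundary reaches the brake point, since the whole boundary has been collapsed to one point. (We could instead invoke the Lost Theorem and use the brake orbit through $q_0$ as a JM geodesic to the boundary, but bounding its length requires extra control, so the explicit ray is cleaner.)

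The key step is to compute the JM length of this ray using homogeneity. Along $q(\lambda) = \lambda q_0$ we have $\|dq\| = \|q_0\|\, d\lambda$ and $U(\lambda q_0) = U(q_0)/\lambda$. The JM length is therefore
\[
L(q_0) = \int_1^{\lambda_*} \sqrt{2\left(\frac{U(q_0)}{\lambda} - h\right)}\, \|q_0\|\, d\lambda .
\]
We substitute $\lambda = \lambda_* s$, so $U(q_0)/\lambda - h = h(1/s - 1)$, and get
\[
L(q_0) \le \|q_0\|\,\lambda_* \sqrt{2h}\int_0^1 \sqrt{\tfrac{1}{s} - 1}\, ds = \frac{\pi}{2}\sqrt{2h}\,\frac{\|q_0\|\, U(q_0)}{h}.
\]
The integral equals $\pi/2$, so it is finite. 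The quantity $\|q\|\,U(q)$ is homogeneous of degree $0$, hence a function on the unit sphere. We first reduce to center of mass zero: translation is a JM isometry, and the N-body problem is usually posed in the center-of-mass frame. Otherwise the Hill region is not bounded in the translation directions, although the JM metric is translation invariant, so we take the diameter of the quotient.

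The main obstacle is that $\|q\| U(q)$ is unbounded on the unit sphere near collisions, where $U \to \infty$. So the radial ray bound is only uniform on the compact set of the sphere at a fixed distance from the collision locus. We handle this by splitting into cases.

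\textbf{Case (a).} Points whose shape on the unit sphere stays at a fixed distance from collisions. Here $\|q\| U(q) \le C$ and the estimate above applies directly.

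\textbf{Case (b).} Points near a collision. First we move from $q_0$ to a point $q_1$ of case (a) with $U(q_1)$ comparable to $U(q_0)$, via a path of bounded JM length. Note that $\sqrt{U}\,\|dq\|$ integrates finitely near collisions, because $U \sim 1/r$ and $\int r^{-1/2}\,dr < \infty$. Concretely, we separate the colliding cluster: for a near-binary, we increase $r_{ab}$ along the straight segment in the binary's Jacobi coordinate while keeping the other coordinates fixed. Its JM length is $\lesssim \int \sqrt{2m/r}\, dr$, which is bounded in terms of the final separation. For larger clusters we induct on cluster size, using that each separation step costs a bounded amount of JM length.

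Combining cases (a) and (b), every point lies within a uniform JM distance of the brake point. This gives finite diameter, and also covers the completion, including the collision points themselves.
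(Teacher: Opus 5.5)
Your radial-ray computation is correct: along $\lambda\mapsto\lambda q_0$ the substitution $\lambda=\lambda_* s$ gives $L(q_0)\le\frac{\pi}{2}\sqrt{2/h}\,\|q_0\|\,U(q_0)$. This handles every configuration whose normalized shape stays a fixed distance from the collision locus. The paper gives no proof here and defers to a separate reference, so I am judging your argument on its own merits. The trouble is case (b). All of the theorem's difficulty lies there, and the mechanism you describe does not work.

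Your bound ``JM length $\lesssim\int\sqrt{2m/r}\,dr$'' for opening one binary, with the other Jacobi coordinates frozen, silently drops the rest of the potential, $U-Gm_am_b/r_{ab}$. In case (b) that remainder is unbounded. Take four bodies forming two tight binaries whose centers are at unit distance, with $r_{12}=\epsilon$ and $r_{34}=\epsilon^{10}$.
\begin{itemize}
\item Your moves keep the centers fixed. So you reach neither a shape bounded away from collision nor the Hill boundary until both binaries are opened to size of order one.
\item Suppose you open one binary to that size while the other is still at its initial separation. Then $\sqrt{2(U-h)}\gtrsim\epsilon^{-1/2}$ along a path of Euclidean length of order one. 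That single step costs at least of order $\epsilon^{-1/2}$, or of order $\epsilon^{-5}$ if you open the looser binary first.
\item A bounded-cost route exists only by interleaving. Open the tighter binary up to the next scale $\epsilon$, then dilate both clusters together. Equivalently, alternate doubling steps, whose costs $\sim(2^k\epsilon)^{1/2}$ form a geometric series.
\end{itemize}
So ``each separation step costs a bounded amount'' is exactly what needs proof. Induction on cluster size does not address several clusters at unrelated scales, nor general nested hierarchies.

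The missing ingredient is a scale-invariant collision estimate. One example is the Maderna--Venturelli H\"older bound: the infimum of $\int\sqrt{2U}\,\|dq\|$ over paths from $x$ to $y$ is at most $\eta\,\|x-y\|^{1/2}$. Its proof is a real induction on the number of bodies. It rests on the $\lambda^{1/2}$ scaling of $\int\sqrt{2U}\,\|dq\|$ under $q\mapsto\lambda q$, the same scaling the paper exploits in its mountain-pass discussion. Combine it with two facts:
\begin{itemize}
\item $\sqrt{2(U-h)}\le\sqrt{2U}$;
\item any path leaving the Hill region can be rerouted through the brake point at no extra cost.
\end{itemize}
Together these supply what your case (b) needs.

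A smaller point: case (b) cannot always end in case (a). A tight binary with the third body at distance $R\to\infty$ lies in the Hill region. But every case (a) point $q_1$ of the Hill region satisfies $\|q_1\|\le C/h$, since $h\le U(q_1)\le C/\|q_1\|$, and your binary-opening move never brings the distant body in. The move does hit $U=h$ first, which is fine because that is the brake point. You therefore need to state the target in case (b) as ``a case (a) point or the Hill boundary''. Your cost bounds must also be uniform on these unbounded parts of the Hill region.
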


We prove this theorem in \cite{finiteDiam}.
From the theorem   we can prove:

\begin{proof}

 [of the Lost Theorem] 
Let $q_*$ be a point in the interior of the Hill region,
so   that  $\infty > U(q_*) > h$.  Use  the direct method of the calculus of variations
to  find a path $c$   in $M_h$ joining $q_*$ to the brake point $B \in M_h$
 which  minimizes  the  JM length among all such paths.   A variant of Marchall's lemma (\cite{Brake-to-Syz})
 guarantees  that this length-minimizing curve
has no  collisions.       
It follows that this curve  is a Riemannian geodesic and after 
  reparameterization becomes an  energy $E$ solution
to   \eqref{N1} 
\end{proof}

\subsection{Mountain Pass Dreams}

Take any free homotopy class for the planar N-body problem.
We can represent it by a curve of zero length  lying on the Hill boundary.
Now apply our scaling to the curve:  $q_{\lambda} (t) = \lambda q(t)$.
As $\lambda \to 0$ the curve shrinks to total collision at $q =0$.
Its JM length  also shrinks to zero.  This is most easily seen
by first  isotoping the curve to lie on a small    sphere $r = const$ within the interior of the
Hill region. Here   $r^2 = I = \|q \|^2$.  We can   write the potential as $U(r \omega) = \frac{1}{r} \hat U (\omega)$
where $\hat U$ is homogeneous of degree zero and so represented as a function on the
sphere $r = 1$.  Write the kinetic energy metric in spherical coordinates using $q = r \omega$, with  
$dr ^2 + r^2 d \omega^2$ and  $d \omega^2$ is the standard metric, or squared arclength,  on
the unit sphere $r = 1$.  Then, since $dr = 0$ for our isotoped curve $\hat c$,  the energy $E = -h$ Jacobi-Maupertuis
arclength $ds_{E}$ of this curve  satisfies   $ds_{E} = \sqrt{2(-h + U(r \omega)} (r d \omega ) \le   r^{1/2} \sqrt{2 \hat U (\omega)} d \omega $.
Now  consider the case $h = 0$ and the  length of our isotoped curve, normalized to lie on the  unit sphere $r =1$.
This length is  some constant $C = \int_{\hat c} \sqrt{2 \hat U (\omega)} d \omega $.
Consequently our shrinking curve, placed on the sphere of radius $r $ (small),  has length less than or equal to  $r^{1/2} C$
which goes to zero with $r$.   Our precisely scaled curve $\lambda q(t)$,   does not lie on a fixed sphere,
but as it shrinks it lies within a shrinking spherical annulus $c_1 \eps < r < c_2 \eps$, since the curve is compact.   The error terms resulting
from the $dr^2$ term of the metric are easily verified to not change the asymptotics of the collapsing length of our curves.
Consequently the  JM length  of $q_{\lambda} (t)$ goes to zero as $\lambda^{1/2}$ with  $\lambda \to 0$.   

This scenario suggests   a  mountain pass
set-up for finding periodic solutions in a given free homotopy class.
Consider one-parameter families $q_{\lambda} (t)$ of curves, where $\lambda \in [0, 1]$
is the parameter.  Suppose at one endpoint   $\lambda = 1$ the curve lies completely
on the    Hill boundary,  while at $\lambda = 0$ we have $q_0 (t) \equiv 0$,
and that   in between, for $0 < \lambda < 1$ the curve $q_{\lambda}$ lies entirely
within the interior of the Hill region and realizes the chosen free homotopy class.   Take
the curve of maximum length in such a family. Presumably this curve crosses the virial hypersurface.
Maybe.  Anyhow, every such one-parameter family must cross  ``the  mountain pass''
joining our two  zero  JM-length  valleys consisting of curves supported on the  Hill boundary and curves
shrinking to  total collision. 
Now take the minimum (infimum)  of all these  maxima over all such one-parameter families.  Show that
the resulting min-max curve 
\begin{itemize}
\item{(i)} exists 
\item{(ii)} is collision-free
\item{(iii)} avoids the Hill boundary
\item{(iv)} extremizes JM length
 \end{itemize}
 Such a min-max curve would be a reparameterization
 of a periodic solution to Newton's equation in the desired free homotopy class.

\begin{oproblem}
Can you get a  mountain pass scheme  to work in order to establish  a   min-max  solution
realizing any given  braid type for the planar N-body problem?  
\end{oproblem}

\begin{remark}  Moeckel and I \cite{allRealized} established
solutions in every braid type for the planar 3-body problem.   We did so using
dynamical methods, not variational ones.  Our  solutions,
all  have thickness close to $1$ :  they come very close to collision
and to brake, being that they shadow very eccentic central configuration solutions
during parts of their motion.  I hope  a mountain pass scheme
might give us small thickness solutions realizing rather arbitrary braids on 3 strands.
\end{remark} 
 
 \section{Escape solutions}
 
Each virial annulus \eqref{U constraint} splits the Hill region in two,
namely  into the   components,
  $\{q:  h \le U(q) \le 2h/(1+k) \}$ and $\{q :  2h/(1-k) < U(q) \le + \infty \}$.
 Any solution which lies entirely within   one component   must be unbounded.
 Moreover  the virial equation must fail for these solutions. 
 Do both components  support unbounded solutions which exist for all time? For any $k$?
 No.
 
 \begin{proposition} There is an $\eps  > 0$  such 
 any energy $E = -h$ solution starting in the region $\{ h \le U \le h + \eps \}$
 leaves that region in finite time.
 \label{Hill collar}
 \end{proposition}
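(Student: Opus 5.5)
The plan is to use the Lagrange--Jacobi identity from the proof of Theorem \ref{periodic virial}, specialized to the thin collar near the Hill boundary, and show that $I=\langle q,q\rangle$ is uniformly concave there. I would take any $\eps$ with $0<\eps<h$; the value $\eps=h/2$ is a concrete choice.

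First I would substitute the energy relation into the identity. Along an energy $E=-h$ solution we have $K = U - h$. Hence
\[
\ddot I = 4K - 2U = 4(U-h) - 2U = 2U - 4h .
\]
As long as the solution stays in $\{h \le U \le h+\eps\}$, this gives $\ddot I \le 2(h+\eps) - 4h = -2(h-\eps) =: -2c$, with $c>0$ depending only on $h$ and $\eps$. The choice $\eps<h$ is exactly what keeps the collar strictly on the Hill-boundary side of the virial surface $\{U=2h\}$, where the right-hand side $2U-4h$ is negative.

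Next I would integrate this bound. Suppose the solution starts at $t=0$ in the collar and remains there on $[0,T)$. The region $U\le h+\eps$ stays a definite distance from the collision locus, so $\nabla U$ is bounded and analytic there. Thus the solution cannot end in a singularity while it remains in the region, and the integration is legitimate. Integrating twice gives
\[
I(t) \le I(0) + \dot I(0)\, t - c\, t^2 \quad \text{for } 0 \le t < T.
\]
Since $I=\|q\|^2 \ge 0$, the right-hand side must stay nonnegative. This forces $T \le T_*$, where $T_*$ is the positive root of $I(0)+\dot I(0)t - ct^2$. The same argument in backward time, using $t\mapsto -t$ symmetry of \eqref{N1}, gives the corresponding statement for the past. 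Hence the solution leaves the collar in finite time. If the solution happens to be defined only for finite time, that is already consistent with the claim.

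The main obstacle is making sure the differential inequality is applied only on the time interval where the solution is actually inside the collar. This is handled by arguing by contradiction on the maximal such interval, as above. One could add a remark that $T_*$ is bounded in terms of $I(0)$ and $|\dot I(0)| \le 2\sqrt{I(0)}\sqrt{2K} \le 2\sqrt{2\eps I(0)}$, which gives a quantitative exit time. No translation to center-of-mass coordinates is needed, since $I\ge 0$ holds in any frame.
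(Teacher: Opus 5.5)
Your proof is correct, and it takes a genuinely different route from the paper. The paper only sketches a \emph{local} argument. Near each point of the Hill boundary it uses ``Seifert'' coordinates in which $U-h\approx y$ and the kinetic metric is nearly Euclidean, so the energy $-h$ flow looks like a thrown ball. Combining the facts that $h$ is a regular value of $U$, uniform bounds on $\nabla U$ and $d^2U$, and results from two earlier papers, it concludes the following for all sufficiently small $\eps$: every solution entering $\{U\le h+\eps\}$ exits $\{U\le h+K\eps\}$ transversally, within a uniform time $C\sqrt{\eps}$.

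You instead use the \emph{global} Lagrange--Jacobi identity, which at fixed energy becomes $\ddot I = 2U-4h$. This makes $I$ uniformly concave on the collar, and the constraint $I\ge 0$ forces exit.

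Your approach gains three things. First, it is self-contained and complete rather than a sketch. Second, it works for every $\eps<h$, and this threshold is sharp: the relative equilibria sit on $U=2h$ and never leave $\{h\le U\le 2h\}$. Third, it shows directly that for every $k\in(0,1]$ no solution can stay in $\{h\le U\le 2h/(1+k)\}$ for all forward time. So the paper's later step of taking $k$ close to $1$ is unnecessary.

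What your approach does not give is uniformity or local structure. Your exit time $T_*$ depends on $I(0)$ and $\dot I(0)$, and $I$ is unbounded on the collar (already by translations, and for $N\ge 3$ also by sending one body far away), so you get no uniform exit time. You also get neither the transverse crossing nor the ``bounce off the Hill boundary'' picture that the paper's local analysis provides, which is the information needed for the fine dynamics near the brake locus.
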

 
 \begin{proof}   [Sketch]  If we put together the results of \cite{Whos Afraid}
 and \cite{Filling}  we can establish the following more detailed
 picture of dynamics near the Hill boundary.   There exist   constants  $\eps_0 > 0$, 
  $K > 1$ and   $C > 0$ with the following significance.
  For all   $\eps < \eps_0$ the following holds.  Any energy $E = -h$ solution
 which enters into the region  $\{ U \le h + \eps \}$  will exit  the region $\{ U \le h + K \eps \}$
 by crossing  the hypersurface $U = h + K \eps$ transversally.  This crossing is transverse
 in both forward and backward time. The   exiting (and hence entering)
 occurs within a time of less than   $C \sqrt{\eps}$.  
 The proposition we stated follows by taking $\eps = \eps_0/2$. 
 
 The idea of the proof of this
 more detailed assertion of the previous paragraph is to use 
 the fact that $h$ is a regular value of $U$, and uniform estimates on $\nabla U$ and
 $d^2 U$ to turn the dynamics near the hill boundary to approximately the
 dynamics of a thrown ball.  We choose ``Seifert'' coordinates $(x_1, \ldots ,x_{n-1}, y)$
  near points of the Hill  boundary
 as described in \cite{Whos Afraid} so that the Hill boundary is given by
 $y = 0$, so that $U - h = y + $(error) and the energy $E$
 dynamics looks roughly like that for the potential $y$ in coordinates
 $(x_1, \ldots, x_{n-1}, y)$ where kinetic energy is $\sum dx_i ^2 + dy^2 + $(error).  
 
 \end{proof} 
     
 As $k \to 1 $ we have $2 h /(1+k) \to h$, so we by taking   $k$    close to $1$ 
 we guarantee that 
 $2h /(1+k ) =  h + \eps$ where $\eps$ is as in  proposition \ref{Hill collar}.  
  The  proposition now  guarantees that energy $h$ solutions starting in the component 
  $h \le U \le 2h/(1+k)$ 
 cannot stay in this  component.   
 
 \vskip .5cm 
What about   the other component,  $\{ 2h/(1-k) < U \}$, the one which contains
 the collision locus?   Can   solutions
   remain there for all  time?  (Note   
 as $k \to 1$ we have $C = 2h/(1-k) \to \infty$.)   
 Moeckel's theorem \ref{Moeckel thm}  above 
 asserts that the answer is `yes', at least for the three-body problem.
 Doubtless it is true for more bodies as well. 
 
 \section{Escape scenarios} 
 \label{sec: escape}
 Both  the 
 scenarios  we will describe yield  two-sided hyperbolic-elliptic escape solutions
 as described above in the remark \ref{hyperbolic-elliptic}.
 \subsection{Birkhoff-Moeckel's escape} 
  
 Moeckel's proof of his theorem \ref{Moeckel thm}   refines  
 an argument   found in Birkhoff \cite{Birkhoff} on  
  p. 278-282 of chapter IX.8.  Observe that 
  $I = \|q \|^2$  measures both 
  closeness to triple collision and escape.  If  $I(q)  \to 0$  then  $q \to 0$ which represents  triple collision,
  the origin  when we restrict to the center-of-mass zero subspace of $\E$.
  And when   $I(q) \to \infty$ while $U(q) \ge h = -E$   we have that the distance  of  one of three bodies 
  relative to the center of mass of the other two is tending to infinity.  The other two bodies  must stay within a bounded 
  distance of each other because
  of the negative energy constraint.  
  
   Introduce the conserved total angular momentum $J$.
     A result going back to Sundman asserts that for the three-body problem
  triple collision   is impossible along solution arcs  with  non-zero  angular momentum.
  However   families of initial conditions having
  fixed $J$ and $E$ may have  $I$ as small as we wish. 
  Moeckel's argument involves such families of initial conditions.

  Call a turn-around point along an orbit a point (or a time) at which  which $\dot I = 0$.
  Write $I_0$ for the value of $I$ at turn-around.  Moeckel  chooses  initial conditions
  for solutions at time $t = 0$ 
  having fixed $J \ne 0$, fixed $E = - 2h < 0$ and a turn-around point
  with very  small $I_0$ at turn-around.    Using McGehee blow-up of triple collision
  Moeckel shows that if   
  $I_0 <J^2 / 2h$ then  $\ddot I (0) > 0$.  Such a  solution then  satisfies   $I(t) \ge I_0 + a t^2$ on some   interval $[-\delta, \delta]$.   
  He then follows the argument in  Birkhoff to arrange that certain quantities are large enough at $t = \pm \delta$
  to insure escape.  This means that   $I(t)$ increases   monotonically for $t > \delta$ and hence on all of $[0, \infty)$
  with  $I(-t)$ similarly 
  increasing  strictly monotonically for $t > 0$.  So, overall $I(t)$ has a unique minimum at turn-around
  and tends to $\infty$ as $|t| \to \infty$.  
  
  Moeckel's   solution family is an   open non-empty family.
   To insure the solutions actually    exist for all time he can  either allow   regularizations of binary collisions a la Levi-Civita
  or argue that binary collisions
  are of measure zero to simply avoid all collisions in a full measure  sub-family of his   family. 
  
   \subsection{Sitnikov's escape} Recall  that the   original Sitnikov
   problem \cite{Sitnikov}, with its wonderful example of chaos and escape is a   restricted spatial isosceles
   three-body problem in which the two equally massive primaries
   move in slightly eccentric Keplerian orbits.  Make Sitnikov's problem  unrestricted, which is to say   consider
   the  spatial isosceles three-body problem with non-zero angular momentum and masses $m_1 = m_2 = m$. 
   At each  instant
   the three  bodies form an isosceles triangle with vertex $m_3$     lying on the z-axis,
   and   bodies 1 and 2  instantaneously spinning about this axis, spaced
   at the same height and at the same distance from the axis.    This symmetric subsystem
   of the three-body problem forms a   two-degree of freedom Hamiltonian system.
  (See \cite{Chesley}.)
     
  Spatial isosceles solutions can easily be constructed which are  of hyperbolic-elliptic type in either  time direction.
 The   third body would arrive  at infinity at a finite speed.
 Suppose that the  12 bound pair forms asymptotes to a circular  binary Keplerian orbit   moving in a   circle of radius $a$.  
  We can certainly construct such orbits in a  one-sided ($t \to + \infty$) fashion while   insuring  the total  energy $E = -h$ is negative by making the tight binary of sufficiently
  negative energy, which means taking   $a$ sufficiently small relative to the kinetic energy of the third body. 
  \begin{oproblem}  Let $a$ be any positive  radius  with $\frac{2m}{a}  > h$.
  Are there   energy $E = -h$ collision-free   solutions to the spatial isosceles three-body problem with all three masses
  nonzero  and  for which the 12  binary asymptotes
  to a circle of radius $a$
  in both  the distant future and  distant past?  Can we insure
  moreover that the motion is collision-free and that  throughout the motion,   $U \ge   \frac{2m}{a} $? 
  \end{oproblem} 
    We hope that one of our readers takes up this challenge and proves this or a closely related  ``Sitnikov version''
 of Moeckel's theorem \ref{Moeckel thm}.

 \vskip .3cm
 
 {\bf Acknowledgement.}  I'd like to thank Rick Moeckel
 for useful conversations.  I am also grateful to the Simon's foundation for a travel
 grant that allowed me to attend the May 2025 AMS conference in San Luis
 Obispo from which a conference proceedings was organized which led to this article.

\bibliographystyle{amsplain}

\end{document}